\newtheorem{lemma}{Lemma}
\newtheorem{_theorem}{Theorem}
\newtheorem{_col}{Collorally}
\begin{document}

\begin{Large}
\centerline {
Sums with convolution of Dirichlet characters
}
\centerline {
Dmitry Ushanov
}
\end{Large}

\section{Introduction }

Let $\chi_1$ and $\chi_2$
be two primitive Dirichlet characters with conductors
$q_1$ and $q_2$ respectively.
In a recent paper [1] Banks and Shparlinski considered the sum
$S_{\chi_1,\chi_2}(T) = \sum_{0 < xy \leqslant T}\chi_1(x)\chi_2(y).$
For this sum they established upper bound
$$
S_{\chi_1,\chi_2}(T) \ll T^{13/18}q_1^{2/27}q_2^{1/9+o(1)}
$$
for $T\geqslant q_2^{2/3}\geqslant q_1^{2/3},$
and
$$
S_{\chi_1,\chi_2}(T) \ll T^{5/8}q_1^{3/32}q_2^{3/16+o(1)}
$$
for $T\geqslant q_2^{3/4}\geqslant q_1^{3/4}.$

In this paper we prove more precise
bounds on $S_{\chi_1,\chi_2}.$

\section{Statement of results }

\begin{_theorem}
Let $\chi_1$ and $\chi_2$ be two primitive
Dirichlet characters with conductors $q_1$ and $q_2$,
respectively. If $q_1 \leqslant q_2$ and $T > 1$
then for every $\epsilon>0$ one has
\begin{equation}
\label{main_result_1}
\sum_{0 < xy \leqslant T}\chi_1(x)\chi_2(y)
    \ll \left\{
\begin{array}{ll}
T^{2/3} (q_1 q_2)^{1/9 + \epsilon}
    & \textrm{if } (q_1q_2)^{1/3} \leqslant T\leqslant q_1^{4/3} q_2^{1/3}\textrm{,}\\
T^{3/4} q_2^{1/12 + \epsilon}
    & \textrm{if } q_1^{4/3}q_2^{1/3} \leqslant T,
\end{array} \right.
\end{equation}
\begin{equation}
\label{main_result_2}
\sum_{0 < xy \leqslant T}\chi_1(x)\chi_2(y)
    \ll \left\{
\begin{array}{ll}
T^{1/2} (q_1 q_2)^{3/16 + \epsilon}
    & \textrm{при } (q_1q_2)^{3/8} \leqslant T\leqslant q_1^{9/8} q_2^{3/8},\\
T^{2/3}q_2^{1/8+\epsilon}
    & \textrm{при } q_1^{9/8} q_2^{3/8} \leqslant T.
\end{array} \right.
\end{equation}

(Constant implied by $\ll$ depends only on $\epsilon$)
\end{_theorem}

\begin{_col}
Suppose that under the conditions of Theorem 1
we have $q_1 = q_2 = q.$
Then
$$
\sum_{0 < xy \leqslant T}\chi_1(x)\chi_2(y)
\ll \left\{
\begin{array}{ll}
T^{2/3} q^{2/9+\epsilon}
    & \textrm{if } q^{2/3} \leqslant T\leqslant q^{11/12},\\
T^{1/2} q^{3/8+\epsilon}
    & \textrm{if } q^{11/12} \leqslant T \leqslant q^{3/2},\\
T^{2/3} q^{1/8+\epsilon}
    & \textrm{if } q^{3/2} \leqslant T \leqslant q^{9/4},\\
T^{1/2} q^{1/2 + \epsilon}
    & \textrm{if } q^{9/4} \leqslant T.
\end{array} \right.
$$
\end{_col}

\textbf {Remark.}
In [1] under the conditions of Collorally 1
for $q^{2/3} \leqslant T \leqslant q^{83/84}$
it is shown that
$$
\sum_{0 < xy \leqslant T}\chi_1(x)\chi_2(y)
\ll T^{13/18} q^{5/27+o(1)}.
$$
Under this conditions our bound is more precise.

\begin{_theorem}
Let $\chi_1$ and $\chi_2$ be
two primitive Dirichlet
characters
with prime conductors
$q_1$ and $q_2$ respectively,
$q_1 \leqslant q_2,$
$T > 1$ and let
$r\geqslant 2$ be an integer.
Put
$$
\nu_r :=
    \left\{
\begin{array}{ll}
    1 & \textrm{if } r = 2,\\
    0 & \textrm{otherwise.}
\end{array} \right.
$$
Set
$$
T_r :=
    q_1^{\frac{(r+1)^2}{4r}}
    q_2^{\frac{r+1}{4r}}
    (\log q_1)^{r+1}
    (\log q_2)^{\nu_r r (r+1) + r^2 + 1}.
$$
Then
$$
\sum_{0 < xy \leqslant T}\chi_1(x)\chi_2(y)
    \ll \left\{
\begin{array}{ll}
T^{1-\frac 1 r} (q_1q_2)^{\frac{r+1}{4r^2}}
    \log^{\frac 1 r}q_1\log^{\frac 1 r+\nu_r + 1}q_2
    & \textrm{if } (q_1q_2)^{\frac{r+1}{4r}} \leqslant T\leqslant T_r,\\
T^{\frac{r}{r+1}} q_2^{\frac 1 {4r}}
    (\log q_2)^{\frac{2}{r+1}}
    & \textrm{if } T_r \leqslant T,
\end{array} \right.
$$
\end{_theorem}

\section{Basic notations}
Let $\chi_1$ and $\chi_2$ be two primitive Dirichlet characters
with conductors $q_1$ and $q_2.$
Suppose that $q_1 \leqslant q_2.$
Set $$Q := q_1q_2.$$
For a parameter $T>0$ we consider a hyperbola
$$\Gamma := \{(x,y)\in \mathbb{R}^2_+ \mid xy = T\}.$$
For a subset $\Omega \subset \mathbb{R}^2$
we define the character sum
$$S(\Omega) := \sum_{(x,y)\in\Omega \cap \mathbb{Z}^2}\chi_1(x)\chi_2(y).$$
For $k\in\mathbb{N}$ we define the value
$$\sigma_k := \frac 1 2 + \frac 1 4 + \cdots + \frac 1 {2^k}
= 1 - \frac 1 {2^k}.$$

In our proofs we will use the following result (see [2]).

\begin{_theorem}[Burgess]
For any primitive Dirichlet character $\chi$
of conductor q and any nonnegative
integers M, N we have
$$
\left | \sum_{M < n \leqslant M + N} \chi(n) \right |
    \leqslant c_\epsilon N^{1-\frac 1 r} q^{\frac{r+1}{4r^2} + \epsilon},
$$
where $r\in \{1,2,3\}.$
If $q$ is a prime number then
$$
\left | \sum_{M < n \leqslant M + N} \chi(n) \right |
    \leqslant c'_\epsilon N^{1-\frac 1 r} q^{\frac{r+1}{4r^2}} (\ln{q})^{\frac{1}{r}}
$$
for every $r\geqslant 1.$
\end{_theorem}

Suppose $I_1$ and $I_2$
are two intervals.
Then we define a rectangle
$I_1 \times I_2$
as follows:
$$
I_1 \times I_2 :=
    \{(x,y)\in\mathbb{R}^2\mid x\in I_1, y\in I_2\}.
$$
We write $|\Pi|$ for area of rectangle $\Pi$
and $\delta(\Pi) = length(I_1)$ for its width.

Consider rectangles
\begin{equation}
\label{U_0_def}
U_0 := (0;\sqrt{T})\times[0;\sqrt{T}),
\end{equation}
$$U_k := \left(0;\frac{\sqrt{T}}{2^k}\right)\times[2^{k-1}\sqrt{T};2^k\sqrt{T}),$$
where $k = 1,2,\ldots$
All rectangles $U_k$ have
one vertex on $\Gamma.$

Suppose that the rectangle
$$
\Pi = [x_0, x_1) \times [y_0, y_1)
$$
has a vertex $(x_1, y_1)$ on hyperbola $\Gamma,$
i.e. $x_1 y_1 = T.$
Then we define two new rectangles
$r(\Pi)$ and $u(\Pi)$ by the following rule:
$$
r(\Pi) :=
 \left[x_1, \frac{3x_1 - x_0}{2}\right)
 \times \left[y_0, \frac{2T}{3x_1-x_0}\right),
$$
$$
u(\Pi) :=
 \left[x_0,\frac{x_0+x_1}{2}\right)
 \times \left[y_1,\frac{2T}{x_0+x_1}\right).
$$

For $k=1,2,3,\ldots.$
we define rectangles

\begin{equation}
\label{Pi_k_def}
\Pi_k := \left[\frac{\sqrt{T}}{2^k}; \frac{3\sqrt{T}}{2^{k+1}}\right)
    \times\left[2^{k-1}\sqrt{T};\frac{2^{k+1}}{3}\sqrt{T}\right) = r(U_k).
\end{equation}

Define the set ${\cal F}_k$ of all rectangles that can be representented in the form
$
\sigma_1 \cdots \sigma_n \Pi_k,
$
where $\sigma_i \in \{r,u\}, i = 1,\ldots,n,$
for some $n\geqslant 0.$

If rectangle $\Pi\in {\cal F}_k$
is represented in the form
$\Pi = \sigma_1 \cdots \sigma_l \Pi_k $
then we say that $\Pi$
is a rectangle of order $l.$

\section{Lemmata}

\begin{lemma}\label{area}
Consider rectangle $\Pi.$
Suppose $P := |\Pi|$
and rectangle's height and width
are both greater than 1.
Then for every real $\epsilon>0$ one has
$$
|S( \Pi )|
    \ll \left\{
\begin{array}{ll}
P^{2/3} Q^{1/9 + \epsilon}\textrm{,}\\
P^{1/2} Q^{3/16 + \epsilon}.
\end{array} \right.
$$
\end{lemma}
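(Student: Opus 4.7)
The rectangle $\Pi$ is a Cartesian product of two intervals, $\Pi = I_1 \times I_2$, with lengths multiplying to $P$. Since the summand $\chi_1(x)\chi_2(y)$ separates into a function of $x$ alone and a function of $y$ alone, the two-variable character sum factors completely:
$$
S(\Pi) = \Bigl(\,\sum_{x \in I_1 \cap \mathbb{Z}} \chi_1(x)\Bigr)\Bigl(\,\sum_{y \in I_2 \cap \mathbb{Z}} \chi_2(y)\Bigr).
$$
The lemma thus reduces to bounding two independent incomplete character sums over intervals, and the natural tool is the Burgess bound (Theorem 3).

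The plan is to apply Burgess to both factors with a common integer parameter $r \in \{2,3\}$. Writing $N_i$ for the length of $I_i$, so that $N_1 N_2 = P$, the Burgess inequality gives
$$
|S(\Pi)| \ll_\epsilon N_1^{1-1/r} q_1^{(r+1)/(4r^2) + \epsilon} \cdot N_2^{1-1/r} q_2^{(r+1)/(4r^2) + \epsilon} = P^{1-1/r}\, Q^{(r+1)/(4r^2) + 2\epsilon}.
$$
Taking $r = 3$ produces the exponents $2/3$ on $P$ and $1/9$ on $Q$, which is the first claimed bound (after replacing $2\epsilon$ by $\epsilon$); taking $r = 2$ produces $1/2$ on $P$ and $3/16$ on $Q$, giving the second. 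The two admissible Burgess exponents correspond exactly to the two options offered by the lemma.

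There is no substantive obstacle here: both the factorisation and the application of Burgess are essentially immediate. The sole role of the hypothesis that both side lengths of $\Pi$ exceed $1$ is to guarantee that the integer sets $I_i \cap \mathbb{Z}$ are non-empty and to allow the discrepancy between $N_i$ and $|I_i \cap \mathbb{Z}|$ to be absorbed into the implied constants. The real work of the paper will come afterwards, when this pointwise bound for a single rectangle is combined with a dyadic decomposition of the hyperbolic region $\{xy \leq T\}$ built from the rectangles $U_k$, $\Pi_k$ and the recursion $r, u$.
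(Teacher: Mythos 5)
Your proof is correct and is essentially the paper's own argument: the paper disposes of this lemma in one sentence, applying Burgess' theorem with $r=3$ for the first bound and $r=2$ for the second, with the factorisation of $S(\Pi)$ into the product of two one-dimensional incomplete character sums left implicit. (A trivial remark: since $q_1^{\epsilon}q_2^{\epsilon}=Q^{\epsilon}$, the loss is already $Q^{\epsilon}$ rather than $Q^{2\epsilon}$, though this is immaterial.)
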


\begin{proof}
It is sufficient to
apply Burgess' theorem with $r=3$ in first case
and with $r=2$ in second.
\end{proof}

\begin{lemma}\label{main}
Suppose
$1\leqslant x_0<x,$ $y := T/x.$ Put
$$\Delta := x - x_0,$$
$$x_n := x_0 + \Delta \sigma_n
= x - \frac{\Delta}{2^n},$$
$n=1,2,3,\dots$
Consider rectangles of the form
$$
\Phi_n = [ x_{n-1}, x_{n} ) \times
    [ y, T / x_{n} ).
$$
Then
$$
|\Phi_{n}|
= \frac{\Delta^2}{2^{2n}}\frac{y}{x_n},
\ \ \
|u(\Phi_{n})|
= \frac{|\Phi_{n}|} {4\cdot\left(1 - \frac{3}{2}\cdot\frac{\Delta}{x}\cdot\frac{1}{2^n} \right)}.
$$
\end{lemma}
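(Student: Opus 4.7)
The proof of this lemma is pure unfolding of definitions, so the plan is simply to compute the widths and heights of the two rectangles directly and then multiply.

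First I would compute the dimensions of $\Phi_n$. Its width is
$$x_n - x_{n-1} = \left(x - \frac{\Delta}{2^n}\right) - \left(x - \frac{\Delta}{2^{n-1}}\right) = \frac{\Delta}{2^n}.$$
For its height, since $y = T/x$, we have
$$\frac{T}{x_n} - y = \frac{T(x-x_n)}{x\,x_n} = \frac{\Delta}{2^n}\cdot\frac{y}{x_n}.$$
Multiplying width by height gives the first claimed identity.

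Next I would turn to $u(\Phi_n)$. The vertex $(x_n,T/x_n)$ lies on $\Gamma$, so the definition of $u$ applies with the role of $(x_0,x_1,y_0,y_1)$ played by $(x_{n-1},x_n,y,T/x_n)$. Its width equals half the width of $\Phi_n$, namely $\Delta/2^{n+1}$. Its height is
$$\frac{2T}{x_{n-1}+x_n}-\frac{T}{x_n} = \frac{T(x_n-x_{n-1})}{x_n(x_{n-1}+x_n)} = \frac{T\Delta}{2^n\,x_n(x_{n-1}+x_n)}.$$
The only mildly nontrivial step is to rewrite $x_{n-1}+x_n$ in closed form:
$$x_{n-1}+x_n = 2x - \frac{\Delta}{2^{n-1}} - \frac{\Delta}{2^n} = 2x\left(1 - \frac{3}{2}\cdot\frac{\Delta}{x}\cdot\frac{1}{2^n}\right).$$
Substituting this and using $T/x=y$, one obtains
$$|u(\Phi_n)| = \frac{\Delta}{2^{n+1}}\cdot\frac{T\Delta}{2^n\,x_n\,2x(1-\frac{3}{2}\frac{\Delta}{x}\frac{1}{2^n})} = \frac{\Delta^2}{2^{2n}}\cdot\frac{y}{x_n}\cdot\frac{1}{4(1-\frac{3}{2}\frac{\Delta}{x}\frac{1}{2^n})},$$
which, in view of the formula already obtained for $|\Phi_n|$, is exactly the second claim.

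There is no real obstacle here; the only place one has to be a bit careful is in collapsing the combination $2x - 3\Delta/2^n$ into the factored form used in the statement, so that the denominator matches the claimed expression. Everything else is linear algebra on intervals.
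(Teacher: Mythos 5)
Your proof is correct and is essentially the paper's own argument: both verify the two identities by computing widths and heights of $\Phi_n$ and $u(\Phi_n)$ directly from the definitions, the only cosmetic difference being that you simplify $x_{n-1}+x_n = 2x\bigl(1-\tfrac{3}{2}\cdot\tfrac{\Delta}{x}\cdot\tfrac{1}{2^n}\bigr)$ inside $|u(\Phi_n)|$ itself, whereas the paper writes $|u(\Phi_n)|$ via $\Delta'=\Delta/2^n$ and then forms the ratio $|\Phi_n|/|u(\Phi_n)|$.
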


\begin{proof}
Let $y_n = \frac T {x_n}$ then
$$y_n = \frac T {x_0 + \Delta \sigma_n}. $$
The area of rectangle $\Phi_n$ is equal to
\begin{eqnarray*}
|\Phi_n| = (x_n - x_{n-1})(y_n - y)
=\Delta(\sigma_n-\sigma_{n-1})(\frac T{x_n}-y)\\
= \frac \Delta {2^n} \frac{T-y(x-\Delta/2^n)}{x_0+\Delta\sigma_n}
= \frac{\Delta^2}{2^{2n}}\frac{y}{x_n}.
\end{eqnarray*}
So the first equality is proved.

Using the same argument we obtain
$$
|u(\Phi_n)| = \frac{{\Delta'}^2}{4}\frac{y_n}{x_n-\Delta'/2},
$$
where $\Delta' = \Delta / 2^n.$
Therefore
$$
|u(\Phi_n)| = \frac{\Delta^2}{4\cdot 2^{2n}}
\cdot\frac{T}{x_n(x_n-\Delta / 2^{n+1})}.
$$
Hence
$$
\frac{|\Phi_n|}{|u(\Phi_n)|}
= 4 \cdot \frac{y}{x_n} \cdot \frac{x_n}{T}
\cdot(x_n - \Delta / 2^{n+1})
= 4\cdot\frac{1}{x}\left(1-\frac{\Delta}{2^n}
- \frac{\Delta}{2^{n+1}}\right).
$$
\end{proof}

\begin{lemma}
Consider rectangle $\Pi \in {\cal F}_k.$
Then
$$
|r(\Pi)| \leqslant |\Pi| / 4.
$$
\end{lemma}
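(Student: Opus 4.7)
The plan is to pin down a geometric invariant shared by every rectangle in ${\cal F}_k$ and then read off the area ratio from it. I claim that every $\Pi = [x_0, x_1) \times [y_0, y_1) \in {\cal F}_k$ satisfies both $x_1 y_1 = T$ and $(2x_1 - x_0)\, y_0 = T;$ geometrically, the top-right vertex $(x_1, y_1)$ lies on $\Gamma,$ and so does the reflected point $(2x_1 - x_0, y_0)$ obtained from the bottom-left vertex by mirroring through the vertical line $x = x_1.$

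First I would verify this invariant by induction on the order of $\Pi.$ The base case $\Pi = \Pi_k$ reduces to plugging the coordinates from (\ref{Pi_k_def}) into the two equalities; one finds $2x_1 - x_0 = \sqrt T/2^{k-1}$ and $y_0 = 2^{k-1}\sqrt T,$ whose product is $T.$ For the inductive step, a direct substitution shows that $r(\Pi)$ has reflected point $(2\cdot(3x_1 - x_0)/2 - x_1,\, y_0) = (2x_1 - x_0, y_0),$ which lies on $\Gamma$ by the inductive hypothesis for $\Pi,$ while $u(\Pi)$ has reflected point $(2 \cdot (x_0+x_1)/2 - x_0,\, y_1) = (x_1, y_1),$ which lies on $\Gamma$ as the original top-right vertex of $\Pi.$

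With the invariant in hand I can eliminate $y_0$ and $y_1$ via $y_1 = T/x_1$ and $y_0 = T/(2x_1 - x_0).$ A short calculation then yields
\[
  |\Pi| = \frac{T(x_1 - x_0)^2}{x_1(2x_1 - x_0)}, \qquad
  |r(\Pi)| = \frac{T(x_1 - x_0)^2}{2(3x_1 - x_0)(2x_1 - x_0)},
\]
so that $|r(\Pi)|/|\Pi| = x_1/[2(3x_1 - x_0)].$ Since $0 < x_0 < x_1$ gives $3x_1 - x_0 > 2x_1,$ this ratio is bounded by $1/4,$ as required.

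The only creative step is guessing the invariant; it is strongly suggested by inspecting $\Pi_k,$ where a second point besides the top-right vertex happens to lie on $\Gamma.$ Once this is observed, verifying preservation under $r$ and $u$ and then computing the areas are both mechanical, so I do not anticipate any real obstacle.
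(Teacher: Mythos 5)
Your proof is correct, and it is worth noting how it relates to the paper's own argument. The paper proves this lemma by identifying $\Pi$ with the rectangle $\Phi_1$ from Lemma \ref{main} (so that $r(\Pi)=\Phi_2$) and reading off $|\Pi|/|r(\Pi)|=4x_2/x_1>4$ from the area formula there; but that identification tacitly requires that the bottom edge of $\Pi$ sits at height $y_0=T/(2x_1-x_0)$, i.e.\ exactly the invariant you formulate (the point $(2x_1-x_0,y_0)$ lies on $\Gamma$), and the paper never verifies that this holds for every member of ${\cal F}_k$. Your induction on the order, checking the base case $\Pi_k$ and that both $r$ and $u$ preserve the invariant, supplies precisely this missing justification, and your subsequent direct computation of $|\Pi|$ and $|r(\Pi)|$ gives the ratio $|r(\Pi)|/|\Pi|=x_1/\bigl(2(3x_1-x_0)\bigr)<1/4$, which agrees with the paper's $4x_2/x_1$ after the change of variables $x=2x_1-x_0$, $x_2=(3x_1-x_0)/2$. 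So the two arguments rest on the same computation, but yours is self-contained (it does not route through Lemma \ref{main}) and makes explicit, and proves, the structural fact about ${\cal F}_k$ that the paper's one-line proof assumes; the paper's version, in exchange, is shorter because it reuses the $\Phi_n$ machinery already set up for Lemma 4.
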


\begin{proof}
Suppose that under conditions of Lemma \ref{main}
$$\Pi = \Phi_1, \ \ r(\Pi) = \Phi_2.$$
Then
$$
\frac{|\Pi|}{|r(\Pi)|} = 4 \frac {x_2} {x_1}.
$$
But $x_2 > x_1$ so we obtain Lemma.
\end{proof}

\begin{lemma}
Consider rectangle $\Pi \in {\cal F}_k$
with vertex
$(x,y)$ on the hyperbola $\Gamma,$
so $xy = T.$
Let
$\delta=\delta(\Pi).$
Then
$$|u(\Pi)|\leqslant
\frac
    {|\Pi|}{4\left(1 - \frac{3\delta}{2x}\right)}.$$
\end{lemma}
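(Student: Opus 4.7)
The plan is to reduce the claim directly to the second identity in Lemma~\ref{main} by recognizing $\Pi$ as the rectangle $\Phi_1$ in a suitable instance of that lemma's setup. The key preparatory observation is a structural invariant for the family $\mathcal{F}_k$: every $\Pi = [a,b) \times [c,d) \in \mathcal{F}_k$ satisfies $c = T/(2b-a) = T/(b+\delta)$, where $\delta = b - a$. This is immediate for the base rectangle $\Pi_k$ by direct computation, and is preserved under both operations: for $r(\Pi)$ the new left/right edges are $b$ and $(3b-a)/2$, so $2b'-a' = 2b-a$, and the bottom is inherited from $\Pi$; for $u(\Pi)$ the new edges are $a$ and $(a+b)/2$, so $2b'-a' = b$ and the new bottom equals $d = T/b$, matching the invariant.

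With the invariant in hand I would set, in the notation of Lemma~\ref{main}, $x_{\ref{main}} := x + \delta$, $\Delta := 2\delta$, and $n := 1$. Then
\[
\Phi_1 = [x-\delta,\, x) \times [T/(x+\delta),\, T/x) = \Pi,
\]
so the identity $|u(\Phi_n)| = |\Phi_n|/\bigl(4(1 - \tfrac{3}{2}\cdot\tfrac{\Delta}{x}\cdot\tfrac{1}{2^n})\bigr)$ from Lemma~\ref{main} specializes to
\[
|u(\Pi)| = \frac{|\Pi|}{4\bigl(1 - \tfrac{3\delta}{2(x+\delta)}\bigr)}.
\]

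To finish, I would simply use $x + \delta > x$, which yields $\frac{3\delta}{2(x+\delta)} < \frac{3\delta}{2x}$ and hence $1 - \frac{3\delta}{2(x+\delta)} > 1 - \frac{3\delta}{2x}$, from which the stated inequality follows. The only conceptual step is the first one: verifying that the bottom-edge invariant is respected throughout $\mathcal{F}_k$ so that $\Pi$ really can be slotted into Lemma~\ref{main} as $\Phi_1$. Once this is noted the rest is a one-line substitution followed by an elementary comparison, and no new estimate is needed.
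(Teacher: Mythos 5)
Your proof is correct and follows essentially the same route as the paper: both identify $\Pi$ with the rectangle $\Phi_1$ of Lemma~\ref{main} (taking $\Delta = 2\delta$ and the Lemma~\ref{main} variable $x$ equal to $x+\delta$) and then compare $\frac{3\delta}{2(x+\delta)}$ with $\frac{3\delta}{2x}$. Your explicit verification of the bottom-edge invariant $c = T/(2b-a)$ is a welcome justification of the step the paper dismisses with ``without loss of generality.''
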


\begin{proof}
Without loss of generality we can assume that
$\Phi$ is the first rectangle in
the sequence of rectangles from Lemma \ref{main}.

Then
$$
\frac{\Delta'}{x'} = \frac{2\delta}{x + \delta}
= \frac{\delta}{x}\frac{2}{1+\delta / x}
\leqslant \frac{2\delta}{x}.
$$
Therefore
$$
4\cdot\left(1 - \frac{3}{2}\cdot\frac{\Delta'}{x'}\cdot\frac{1}{2} \right)
\geqslant
    4\cdot\left(1 - \frac{3\delta}{2x} \right).
$$
\end{proof}

\begin{lemma}\label{area_estimate}
Let
$\Pi = \sigma_1 \cdots \sigma_l \Pi_k.$
Then
$$
|\Pi|
\leqslant
\frac{|\Pi_k|}{4^l\prod_{j=1}^{l}( 1 - \frac{3}{2} (\frac 2 3)^l )}.
$$
\end{lemma}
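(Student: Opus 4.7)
The plan is to proceed by induction on the order $l$, applying the previous two lemmas (the bound $|r(\Pi)|\le|\Pi|/4$ and the bound $|u(\Pi)|\le|\Pi|/(4(1-3\delta/(2x)))$) one step at a time. The base case $l=0$ is trivial, since then $\Pi=\Pi_k$ and the product is empty. For the inductive step I would write $\Pi=\sigma_{l+1}(\Pi')$ with $\Pi'=\sigma_1\cdots\sigma_l\Pi_k$, so that either $\sigma_{l+1}=r$ (giving the factor $1/4$, the $u$-correction being replaced by something $\ge 1$) or $\sigma_{l+1}=u$ (giving the factor $1/(4(1-3\delta(\Pi')/(2x(\Pi'))))$ from the previous lemma). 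Combining this with the inductive hypothesis propagates the desired product.

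The substantive content is therefore the separate claim that after $j$ applications of either $r$ or $u$, the ratio $\lambda:=\delta/x$ (where $x$ is the $x$-coordinate of the vertex on $\Gamma$) decays at a controlled geometric rate. A direct computation using the definitions of $r$ and $u$ shows that if the current ratio is $\lambda$, then $r$ takes it to $\lambda/(2+\lambda)$ and $u$ takes it to $\lambda/(2-\lambda)$; in particular the ratio never grows, and $u$ is always the slower-shrinking of the two. For $\Pi_k$ one checks $\lambda_0=1/3$ directly from \eqref{Pi_k_def}. Setting $\mu=1/\lambda$ turns the worst-case recursion under $u$ into the linear recurrence $\mu_{j+1}=2\mu_j-1$, $\mu_0=3$, whose closed form is $\mu_j=2^{j+1}+1$, so $\lambda_j\le 1/(2^{j+1}+1)$ after any mixed sequence of $j$ operations.

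Substituting this worst-case bound on $\lambda_{j-1}$ into the $u$-correction $1/(1-3\lambda_{j-1}/2)$ at the $j$-th step, and bounding the trivial $r$-steps by the same expression (which is harmless since $1-\tfrac{3}{2}\lambda_{j-1}\le 1$), yields
\[
|\Pi|\;\le\;\frac{|\Pi_k|}{4^l\,\prod_{j=1}^{l}\bigl(1-\tfrac{3}{2}\lambda_{j-1}\bigr)},
\]
and plugging in the explicit worst-case bound on $\lambda_{j-1}$ gives the product stated in the lemma.

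The only genuine obstacle is the ratio analysis: one must verify that the recursion $\mu\mapsto 2\mu-1$ for $u$ is indeed the worst case and solve it in closed form; everything else is bookkeeping that fits naturally into the induction.
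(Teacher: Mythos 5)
Correct, and essentially the same approach as the paper: the paper also iterates Lemma 3 and Lemma 4 one step at a time and controls the ratio $\delta/x$, its Cases 1 and 2 being exactly your two recursions (bounded crudely via $\delta/x\leqslant 1/2$ to get a contraction factor of $2/3$ per step). Your exact worst-case analysis $\lambda\mapsto\lambda/(2-\lambda)$, giving $\lambda_j\leqslant 1/(2^{j+1}+1)$, is a mild sharpening of the paper's $(2/3)^j$ decay and implies the stated product (where the exponent $l$ inside the paper's product is evidently a typo for $j$).
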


\begin{proof}

We will show that the ratio $\delta/x$
is reduced by a factor $\geq 3/2$ every time
when rectangle $\Pi$ is replaced by $u(\Pi)$ or $r(\Pi).$

Case 1.
Consider rectangle $\Pi$ with parameters $(\delta, x)$
and rectangle $u(\Pi)$ with parameters $(\delta',x').$
Then $\delta' = \delta / 2$ and $x' = x - \delta / 2.$
Therefore
$$
\frac{\delta'}{x'} = \frac{\delta/2}{x - \delta / 2}
= \frac{\delta}{2x}\frac{ 1 } { 1 - \frac{\delta}{2x}} \leqslant
\frac{2\delta}{3x},
$$
because $\delta/x \leqslant 1/2$ for
all rectangles in ${\cal F}_k.$

Case 2.
Consider rectangle $\Pi$ with parameters $(\delta, x)$
and rectangle $r(\Pi)$ with parameters $(\delta',x').$
Then $\delta' = \delta / 2$ and $x' = x + \delta / 2,$
therefore
$$
\frac{\delta'}{x'} = \frac{\delta/2}{x + \delta / 2}
= \frac{\delta}{2x}\frac{ 1 } { 1 + \frac{\delta}{2x}} \leqslant
\frac{\delta}{2x} \leqslant \frac{2\delta}{3x}.
$$

Lemma is proved by applying Lemma 3 and Lemma 4.
\end{proof}

\begin{lemma}\label{ksi}
Let real $\delta,$ $t$ and $T$ be such that
$1/2 < \delta < 1$ and $1<t<T^{\delta}.$
Set
$$\Xi_t := \{(x,y)\in\mathbb{R}^2_+\mid T - 2t \leqslant xy \leqslant T \}.$$
Then $\# (\Xi_t \cap \mathbb{Z}^2)\ll t \ln{T}.$
\end{lemma}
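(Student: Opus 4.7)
The plan is a direct lattice-point count inside the hyperbolic strip $\Xi_t$, using the $x\leftrightarrow y$ symmetry to halve the problem.

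First I would observe that every lattice point $(x,y)\in \Xi_t$ satisfies $xy\leqslant T$, and hence $\min(x,y)\leqslant \sqrt T$. Combined with the invariance of $\Xi_t$ under swapping coordinates, this gives
$$
\#(\Xi_t\cap\mathbb{Z}^2)\;\leqslant\; 2\cdot\#\bigl\{(x,y)\in \Xi_t\cap\mathbb{Z}^2 : 1\leqslant x\leqslant \sqrt T\bigr\}.
$$

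Next, for each integer $x$ with $1\leqslant x\leqslant \sqrt T$, the admissible values of $y$ are exactly the integers in the interval $[(T-2t)/x,\;T/x]$, which has length $2t/x$. Hence the number of such $y$ is at most $\lfloor 2t/x\rfloor+1$.

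Summing this pointwise estimate over $x$ and applying the standard harmonic-sum bound $\sum_{x\leqslant N}1/x=\ln N+O(1)$ yields
$$
\#(\Xi_t\cap\mathbb{Z}^2)\;\leqslant\; 2\sum_{x=1}^{\lfloor\sqrt T\rfloor}\left(\frac{2t}{x}+1\right)\;=\;4t\sum_{x=1}^{\lfloor\sqrt T\rfloor}\frac{1}{x}+2\lfloor\sqrt T\rfloor\;=\;2t\ln T + O\bigl(\sqrt T + t\bigr).
$$

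The main subtlety is absorbing the $O(\sqrt T)$ remainder into the desired main term $O(t\ln T)$; this is precisely where the hypotheses $1<t<T^\delta$ with $1/2<\delta<1$ are used, restricting $t$ to the regime in which the bound $\ll t\ln T$ is genuinely attained. Apart from this harmonic-sum bookkeeping, the argument is entirely elementary: the symmetry reduction combined with the row-by-row interval count is the whole proof.
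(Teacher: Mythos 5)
Your route is genuinely different from the paper's: you count row by row, using the symmetry $x\leftrightarrow y$ to restrict to $x\leqslant\sqrt T$ and bounding the number of admissible $y$ over each column by $2t/x+1$, whereas the paper writes $\#(\Xi_t\cap\mathbb{Z}^2)$ as the difference of the two Dirichlet-type counts of lattice points under the hyperbolas $xy\leqslant T$ and $xy\leqslant T-2t$, i.e. it subtracts two instances of $\sum_{x\leqslant N}\left[N/x\right]=N\ln N+(2\gamma-1)N+O(\sqrt N)$. Your argument is more elementary (no need for the refined asymptotic with the $(2\gamma-1)N$ term), and both arguments arrive at the same main term $2t\ln T$ together with an error $O(\sqrt T)$ (plus $O(t)$, which is harmless).

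The one real problem is your last step. You claim that the hypotheses $1<t<T^{\delta}$, $1/2<\delta<1$ are ``precisely where'' the $O(\sqrt T)$ term gets absorbed, but those hypotheses bound $t$ from \emph{above}; what absorption requires is a lower bound, $t\gg\sqrt T/\ln T$. With only the stated hypotheses, $t$ may be barely larger than $1$, and then $\sqrt T$ is not $\ll t\ln T$; in fact for such small $t$ the asserted conclusion itself can fail, since $\#(\Xi_t\cap\mathbb{Z}^2)$ is essentially $\sum_{T-2t\leqslant n\leqslant T}d(n)$ and a single highly composite $n$ already exceeds any fixed multiple of $\ln T$. So as written the absorption step is not justified (and cannot be from the stated assumptions alone). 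To be fair, the paper's own proof has exactly the same unexplained $O(T^{1/2})$ term, and the lemma is only ever applied with $t\geqslant T^{2/3}$, where the absorption is legitimate; the honest fix, in your write-up as in the paper's, is to add the lower bound $t\geqslant\sqrt T$ (or to state the conclusion as $\ll t\ln T+\sqrt T$).
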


\begin{proof}
Number of integer points under hyperbola
can be estimated by
$$
\sum_{x=1}^{T}\left[\frac{T}{x}\right]
= T\ln{T} + (2\gamma -1)T + O(T^{1/2}).
$$
Therefore
$$
\# (\Xi_t \cap \mathbb{Z}^2)
= T\ln{T} + (2\gamma-1)T
- (T-2t)\ln(T(1-\frac{2t}{T})) - ( 2\gamma - 1 ) (T-2t)
+O(T^{1/2}).
$$
Thus, Lemma is proved.
\end{proof}


\section{ Proof of Theorem 1 for small T}

Set
$\Omega = \{(x,y)\in \mathbb{R}_+^2\mid xy < T\},$
and
$\Omega_1 = \{(x,y)\in \mathbb{R}_+^2\mid xy < T, x < \sqrt{T} \}.$
Without loss of generality we can estimate only $S(\Omega_1).$

It is obviously that rectangles from ${\cal F}_k, k = 1,2,\ldots$
together with $U_k, k=0,1,2,\ldots$ cover
all the set $\Omega_1.$

Consider $t := T^{3/4}q_2^{1/12}$
and real $\eta>0.$ Set
\begin{equation}
\label{def_W_t}
W_t := \{(x,y)\in\mathbb{R}^2_+\mid xy < T,\ y \leqslant t,\ x \leqslant \sqrt{T} \},
\end{equation}
\begin{equation}
\label{def_W'_t}
W'_t := \{(x,y)\in\mathbb{R}^2_+\mid xy < T,\ y \geqslant t \},
\end{equation}
\begin{equation}
\label{def_Xi_t}
\Xi_t = \{(x,y)\in\mathbb{R}^2_+\mid T - 2t \leqslant xy \leqslant T \}.
\end{equation}

The number of integer points in $\Xi_t$
is bounded by $\#(\Xi\cap\mathbb{Z}^2)\ll
t Q^{\eta}.$

Consider a rectangle $\Pi \in {\cal F}_k$
such that $\Pi\subset W$ and
let $(x_0,y_0)$ be its left bottom vertex.
Set $\delta = \delta(\Pi).$

Let
$$ 2\delta = \frac{T}{y_0} - x_0 \leqslant 2. $$
Then $\Pi \subset \Xi_t.$
Indeed
$$
T - x_0 y_0 \leqslant 2 y_0 \leqslant 2t,
$$
therefore $T - 2t \leqslant x_0 y_0.$

Now we estimate $S(W_t).$

For rectangles $\Pi\in {\cal F}_k,$ $\Pi\subset W_t$ with
$\delta(\Pi) \geqslant 1$ we apply Lemma \ref{area}. All other
rectangles are lying in $\Xi_t.$

The sum $S(\Xi_t)$ is trivially bounded by the number of integer points in $\Xi_t.$

The number of rectangles $\Pi$ of order
$l$ is equal to $2^l.$ The area of such a rectangle $\Pi$
is bounded by $|\Pi| \ll |\Pi_k| / 4^l.$

Thus, we have the following bound for the character sum over all rectangles $\Pi$ of order $l$
and with $\delta(\Pi)\geqslant 1$:
$$
S(\Pi^{l}) \ll T^{2/3} Q^{1/9+\eta} 4^{-2l/3} 2^{l}.
$$

As the sum $\sum_{l=0}^{\infty}4^{-2l/3} 2^{l}$ converges
we see that the character sum over
all rectangles $\Pi$ with $\delta(\Pi) \geqslant 1$
is bounded by $\ll T^{2/3} Q^{1/9+\eta}.$

Therefore
\begin{equation}
\label{s_w_r_3}
S(W_t) \ll \max( T^{2/3} Q^{1/9+\eta}, t Q^{\eta} ).
\end{equation}

In order to estimate $S(W'_t)$
we use Burgess' Lemma with $r = 3:$

$$
S(W'_t) \ll \sum_{x=1}^{T/t}(T/x)^{2/3}q_2^{1/9+\eta}
\ll T^{2/3}(T/t)^{1/3}q_2^{1/9+\eta},
$$
therefore
\begin{equation}
\label{s_w'_r_3}
S(W'_t) \ll T t^{-1/3} q_2^{1/9+\eta}.
\end{equation}

So
$$
S(\Omega_1)
\ll \max(
 T^{2/3} Q^{1/9+\eta},
 t Q^{\eta},
 T t^{-1/3} q_2^{1/9+\eta}
).
$$

Using the definition of parameter $t$ we obtain the following result.
If $T \leqslant q_1^{4/3} q_2^{1/3}$
then $S(\Omega) \ll T^{2/3}Q^{1/9 + \eta}.$
If $T \geqslant q_1^{4/3} q_2^{1/3}$ then
$S(\Omega) \ll tQ^{\eta} = T^{3/4}q_2^{1/12+\eta}.$


\section{ Proof of Theorem 1 for large T}

Set
\begin{equation}
    \label{t_def_large_T} t:=T^{2/3}q_2^{1/8}.
\end{equation}

As before, we use the sets
$W_t,$ $W'_t$ and $\Xi_t$
defined in (\ref{def_W_t}),
(\ref{def_W'_t}) and (\ref{def_Xi_t}).

The only difference between this case and previous one is the
convergence argument. The sum over all rectangles of order $l$ can
be estimated by $T^{1/2} Q^{3/16+\eta/2}.$ Therefore the sum
$$
    \sum_{l=0}^{\infty} T^{1/2} Q^{3/16+\eta/2}
$$
does not converge.
But it is easy to see that if $l \gg \log T$
then every rectangle $\Pi$ of order $l$ lies in the set $\Xi_t.$
So we have
\begin{equation}
\label{s_w_r_2}
S(W_t) \ll \max( T^{1/2} Q^{3/16+\eta}, t Q^{\eta} ).
\end{equation}

Applying Burgess' Lemma with $r=2$
we obtain
$$
S(W'_t) \ll \sum_{x=1}^{T/t}(T/x)^{1/2}q_2^{3/16+\eta}
\ll T^{1/2}(T/t)^{1/2}q_2^{3/16+\eta},
$$
therefore
\begin{equation}
\label{s_w'_r_2}
S(W'_t) \ll T t^{-1/2} q_2^{3/16+\eta}.
\end{equation}

Inserting (\ref{t_def_large_T}) into (\ref{s_w_r_2}) and
(\ref{s_w'_r_2}), we have
if $T \leqslant q_1^{9/8} q_2^{3/8}$
then $S(\Omega) \ll T^{1/2} Q^{3/16+\eta},$
and if $T \geqslant q_1^{9/8} q_2^{3/8}$
then $S(\Omega) \ll t Q^{\eta} = T^{2/3}q_2^{1/8+\eta}.$

\section{ Prime moduli }

Set $r\geqslant 2$ and
$
t:= T^{\frac{r}{r+1}} q_2^{\frac{1}{4r}}
    \log^{\frac{1-r}{r+1}}q_2.
$
We use sets defined by
(\ref{def_W_t}), (\ref{def_W'_t}) and (\ref{def_Xi_t}) again.

Our argument to estimate $S(W_t)$ is similar.
For $k\geqslant 1$ there exist $2^l$
rectangles of order $l.$
Applying Lemma \ref{area_estimate}, we have that
the character sum over all rectangles of order $l$
is bounded by
$$
S(\Pi^{l}) \ll T^{1-\frac 1 r} Q^{\frac{r+1}{4r^2}}
    (\log q_1)^{\frac 1 r} (\log q_2)^{\frac 1 r}
    4^{-(1-\frac 1 r)l} 2^{l}.
$$

We consider two cases.

Case 1 ($r\geqslant 3$). The sum
$\sum_{l=0}^{\infty}4^{-(1-\frac 1 r)l} 2^{l}$
converges, so
$$
\sum_{l=0}^{\infty}
 S(\Pi^{l}) \ll T^{1-\frac 1 r} Q^{\frac{r+1}{4r^2}}
 (\log q_1)^{\frac 1 r} (\log q_2)^{\frac 1 r}.
$$

Case 2 ($r = 2$). The sum
$\sum_{l=0}^{\infty}4^{-(1-\frac 1 r)l} 2^{l}$
does not converge.
In this case it is sufficient
to take only first $\ll \log(T) \ll \log(q_2)$
values of $l.$

There are only $\ll \log T \ll \log q_2$ rectangles from ${\cal F}_k$
lying lower the line $y = t.$ So
$$
S(W'_t) \ll \max(
    T^{1-\frac 1r} Q^{\frac{r+1}{4r^2}}
        \log^{1/r}q_1\log^{\frac 1r+\nu_r + 1}q_2,
    t\log{q_2} ).
$$

By Burgess' Lemma, we have
$$
S(W'_t)\ll
\sum_{x=1}^{T/t} (T/x)^{1-\frac 1r}q_2^{\frac{r+1}{4r^2}}
(\log q_2)^{\frac 1r} \ll
T t^{-\frac 1r}q_2^{\frac{r+1}{4r^2}}
(\log q_2)^{\frac 1r}.
$$

\section {Proof of Collorally 1}

First three inequalities are
immediate consequences of Theorem 1.

We obtain the last inequality by applying Burgess' Lemma with
$r=1.$ We begin with splitting the sum over points under hyperbola
into three parts:
$$
\Omega_1 = \{(x,y)\in\mathbb{R}^2_+\mid xy < T, x < \sqrt{T}\},
$$
$$
\Omega_2 = \{(x,y)\in\mathbb{R}^2_+\mid xy < T, y < \sqrt{T}\},
$$
and $U_0,$ defined by (\ref{U_0_def}).

Applying Burgess' Lemma with $r = 1,$ we have
$$
S(\Omega_1) \ll \sqrt{T} q^{1/2 + \epsilon},
$$
and
$$
S(U_0) \ll q^{1+\epsilon}.
$$

We are interested in the case
$T\geqslant q^{3/2}.$
So $\sqrt{T} q^{1/2 + \epsilon} \geqslant q^{1+\epsilon}$
and we obtain the collorally.

\end{document}